\theoremstyle{definition} 
\newtheorem{theorem}{Theorem}[section]
\newtheorem{lemma}{Lemma}[section]
\newtheorem{proposition}{Proposition}[section]
\newtheorem{definition}{Definition}[section]
\newtheorem{remark}{Remark}[section]
\newcommand{\Z}{\mathbb{Z}}
\DeclareMathOperator{\Int}{int}
\DeclareMathOperator{\ld}{ld}
\DeclareMathOperator{\lw}{lw}
\title{Prism graphs in tropical plane curves}
\author{Liza Jacoby, Ralph Morrison, Ben Weber}
\date{}
\begin{document}

\maketitle

\centerline{\textbf{Abstract}}

Any smooth tropical plane curve contains a distinguished trivalent graph called its skeleton.  In 2020 Morrison and Tewari proved that the so-called big face graphs cannot be the skeleta of tropical curves for genus $12$ and greater. In this paper we answer an open question they posed to extend their result to the prism graphs, proving that they are the skeleton of a smooth tropical plane curve precisely when the genus is at most $11$.  Our main tool is a classification of lattice polygons with two points than can simultaneously view all others, without having any one point that can observe all others.

\section{Introduction}

A tropical plane curve is a combinatorial analog of an algebraic plane curve, and can be defined using a polynomial over the min-plus semiring \cite{ms}.  Alternatively, it can be defined as a polyhedral complex dual to a regular subdivision of a convex lattice polygon, i.e., one with integer coordinates.  Under this duality, vertices of a tropical curve $\Gamma$ correspond to $2$-dimensional cells of a subdivision $\mathcal{T}$ of a lattice polygon $P$, and two vertices of $\Gamma$ are connected by an edge if and only if the corresponding $2$-cells share an edge; the edges $\Gamma$ are then perpendicular to the edges of $\mathcal{T}$.  Similarly, rays of $\Gamma$ are dual to the subdivided boundary edges of $P$, again perpendicular.
We call $\Gamma$ \emph{smooth} if  $\mathcal{T}$ is a unimodular triangulation, meaning that every $2$-dimensional cell is a triangle of area $\frac{1}{2}$.   An example of a unimodular triangulation of a lattice polygon appears on the left in Figure \ref{figure:prism_g4_curve}, with a dual smooth tropical plane curve in the middle.

\begin{figure}[hbt]
    \centering
    \includegraphics[scale=1]{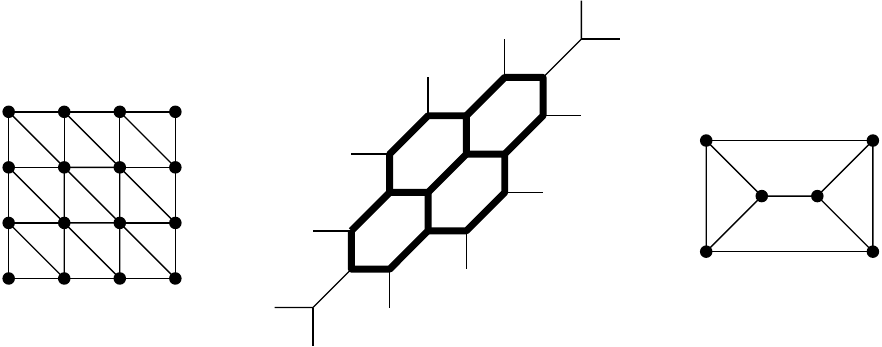}
    \caption{A unimodular triangulation of a lattice polygon; a dual tropical curve; and its skeleton}
    \label{figure:prism_g4_curve}
\end{figure}

The \emph{genus} of the polygon $P$ is defined to be the number of interior lattice points.  Note that if $\Gamma$ is a smooth tropical plane curve dual to a unimodular triangulation of $P$, the number of bounded faces of $\Gamma$ is equal to $g$; we also call this number the genus of $\Gamma$.  A smooth plane tropical curve of genus $g\geq 2$ contains a distinguished graph called its \emph{skeleton}.  This is a trivalent graph that is obtained from $\Gamma$ by removing all rays and iteratively contracting all $1$-valent vertices, and then smoothing over any $2$-valent vertices.  In the tropical curve in Figure \ref{figure:prism_g4_curve}, the subset of the curve contributing to the skeleton is highlighted; the skeleton itself is illustrated on the right.   Although the skeleton is usually viewed as an abstract graph, it {inherits} a planar embedding from the tropical curve, since a subdivided copy of the skeleton is embedded in the plane as a subset of the tropical curve.

Sometimes the skeleton is considered as a \emph{metric} graph, meaning that there are lengths assigned to the edges based on the embedding of the tropical curve \cite{bjms,small2017dimensions}.  In this paper, however, we consider the skeleton as a purely combinatorial graph. Following \cite{small2017graphs}, we say that a graph is \emph{tropically planar} if it is the skeleton of some smooth tropical plane curve.  There are a number of necessary conditions for a graph to be tropically planar; for instance, it must be connected, planar, and trivalent.  There exist many results in the literature that provide additional constraints on tropically planar graphs, often by describing forbidden patterns that cannot appear in such graphs \cite{cdmy,small2017graphs,forbidden_patterns,morrison-hyperelliptic}. 

One recent contribution to this family of results came in \cite{panoptigons}, which studied the so-called \emph{big-face graphs} through the lens of tropical planarity.  A planar graph $G$ is called a big-face graph if for every planar embedding of $G$, at least one bounded face (the ``big face'') shares an edge with every other bounded face.  By \cite[Theorem 1.2 and Appendix A]{bjms}, there do not exist any tropically planar big-face graphs of genus $g\geq 12$.  Their argument goes as follows:  if a big-face graph is tropically planar, then in a dual triangulation there must be a lattice point (dual to the big face) connected to every other interior lattice point.  This means that the interior polygon $P_\textrm{int}$, defined to be the convex hull of the interior lattice points, has all lattice points \emph{visible} (see Definition \ref{defn:visibility}) from a single one of its lattice points; they call such a polygon a \emph{panoptigon}.  By proving that a panoptigon cannot be the interior polygon of a lattice polygon with genus $g\geq 12$, they arrive at the desired result.

In this paper we study a related family of graphs through the lens of tropically planar graphs, namely the \emph{prism graphs}.  For $n\geq 3$, the prism graph $\mathfrak{p}_n$ of genus $n+1$ is  the Cartesian product  $C_n\square K_2$ of a cycle of length $n$ with a path of length $2$.  This is intuitively a pair of cycles, each on $n$ vertices, with each pair of matching vertices connected by an edge.  The usual depiction of a prism graph is as one $n$-cycle inside of another, with $n$ edges connecting the two; in this embedding, there is a bounded face bordering all other faces.  However, most prism graphs are not big-face graphs: they have an alternate embedding that places a $4$-cycle as the unbounded face instead of an $n$-cycle.  The two embeddings of the prism graph of genus $g$ are pictured in Figure \ref{figure:low_genus_prisms} for $4\leq g\leq 7$.  We remark that for $g=4$ and $g=5$, both embeddings have a big face (in fact, the two embeddings of the genus-5 prism $\mathfrak{p}_4$ are combinatorially isomorphic), so the prism graph $\mathfrak{p}_n$ is a big-face graph if and only if $n\leq 4$.  For $n\neq 4 $, we call the big-face embedding the \emph{standard embedding} of $\mathfrak{p}_n$, and the other embedding the \emph{non-standard embedding}.  (Note that since the prism graph is $3$-connected, it has a unique embedding on the sphere by \cite{whitney}.  This means any planar embedding is determined by choosing the unbounded face; thus there really are only the two distinct embeddings for most prisms.)

\begin{figure}[h!]
    \centering
    \includegraphics[scale=0.8]{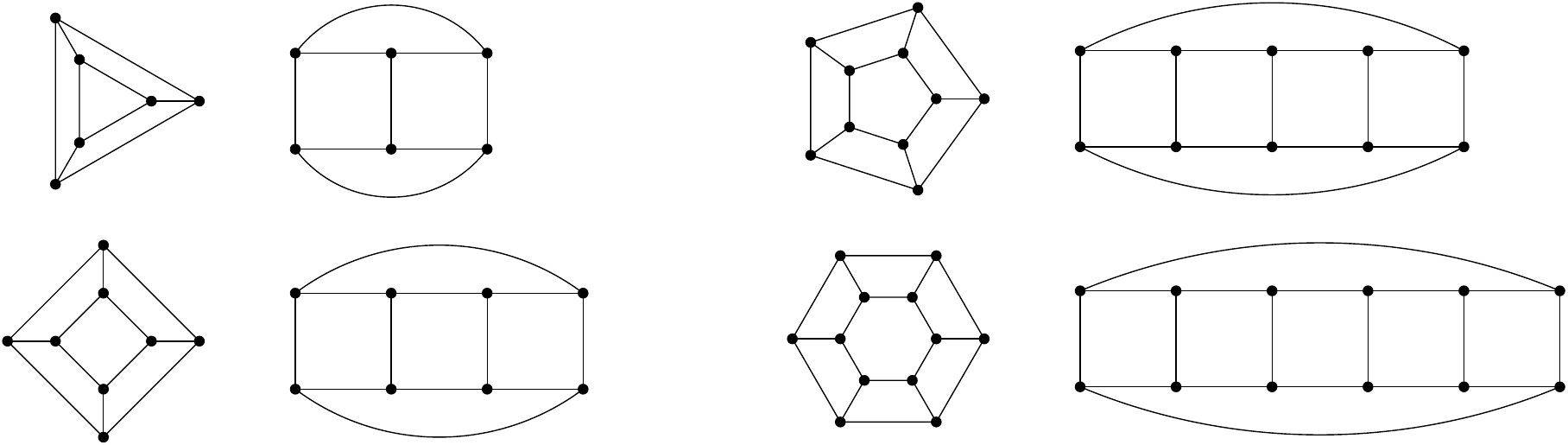}
    \caption{The standard and non-standard embeddings for small prisms (equivalent for $g=5$)}
    \label{figure:low_genus_prisms}
\end{figure}

Our main result is the following, which answers an open question posed in \cite{panoptigons}.

\begin{theorem}\label{thm:prismgraph}
The prism graph of genus $g\geq 4$ is tropically planar if and only if $g \leq 11$.  Moreover, for \(5\leq g\leq 11\) only the standard embedding of the prism graph can be inherited from a smooth tropical plane curve.
\end{theorem}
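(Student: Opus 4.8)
The plan is to convert tropical planarity of the prism graph into a visibility statement about the interior polygon $P_\textrm{int}$, and then rule out each admissible embedding in turn. First I would record the standard dictionary between a smooth tropical plane curve and its dual unimodular triangulation $\mathcal{T}$: the bounded faces of the skeleton are in bijection with the interior lattice points of $P$, and two bounded faces share an edge precisely when the corresponding interior points are joined by an edge of $\mathcal{T}$. Since every edge of a unimodular triangulation joins two mutually \emph{visible} lattice points (Definition \ref{defn:visibility}), adjacency of faces in the skeleton forces visibility between the corresponding interior points. Because $\mathfrak{p}_n$ is $3$-connected, by \cite{whitney} its only planar embeddings are the standard and non-standard ones, so it suffices to analyze the bounded-face adjacency graph of each. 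A direct computation shows that the standard embedding gives a wheel, with one face (dual to a single interior point) meeting every other bounded face, while the non-standard embedding gives a ``double hub'' structure: two faces $U,W$ that do not meet one another, each meeting all of the remaining $g-2$ quadrilateral faces, which themselves form a path.

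For the existence direction I would handle $4\le g\le 11$ by explicit construction: for each such genus I would exhibit a lattice polygon together with a unimodular triangulation whose dual curve has $\mathfrak{p}_{g-1}$ as its skeleton in the standard embedding, and verify the skeleton directly. This is a finite check. For the nonexistence of the standard embedding when $g\ge 12$, the wheel structure says that the interior point dual to the hub face is joined by an edge of $\mathcal{T}$ to every other interior point, and hence is visible from all of them, so that $P_\textrm{int}$ is a panoptigon. By the established fact (see \cite{panoptigons}, \cite{bjms}) that a panoptigon cannot be the interior polygon of a lattice polygon of genus $g\ge 12$, the standard embedding is impossible in that range.

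The heart of the argument is to rule out the non-standard embedding for every $g\ge 6$ (the case $g=5$ being absorbed, since the two embeddings coincide there). The double-hub structure supplies two interior points $p,q$ such that every other interior point is visible from both, while $p$ and $q$ are joined by no edge of $\mathcal{T}$. If $p$ could nonetheless see $q$, then $p$ would be visible from all interior points and $P_\textrm{int}$ would again be a panoptigon, handled above; so we may assume $p$ and $q$ do not see each other. This is exactly the configuration governed by our main tool: the classification of lattice polygons possessing two points that together view all others while admitting no single point that views all. I would invoke this classification and then, running over the finitely many resulting shapes together with the constraints on interior polygons, check that none can serve as $P_\textrm{int}$ while carrying a triangulation whose interior adjacency graph is the double-hub graph on $g$ vertices for any $g\ge 6$. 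Combined with the standard case, this shows $\mathfrak{p}_{g-1}$ is not tropically planar for $g\ge 12$, and that for $5\le g\le 11$ only the standard embedding can be inherited.

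I expect the classification of these two-viewpoint polygons to be the main obstacle: one must simultaneously control the geometry of two distinct ``panoptic'' points and of the lattice point that blocks them from seeing each other, and then translate a purely visibility-theoretic classification back into a statement about which interior adjacency graphs can actually be realized by a unimodular triangulation. Secondary care is needed to confirm that the visibility conditions used in the nonexistence arguments are genuinely necessary (visibility is implied by, but weaker than, being an edge of $\mathcal{T}$), and that the explicit genus $4$ through $11$ constructions really recover the prism graph rather than some other graph with the same number of bounded faces.
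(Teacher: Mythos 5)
Your overall architecture matches the paper's: reduce each embedding to a visibility statement about $P_\textrm{int}$, cite the panoptigon bound for the big-face (standard) embedding, and handle the non-standard embedding via a classification of ``two-viewpoint'' polygons. However, there are two genuine gaps. First, your reduction extracts only \emph{visibility} from the triangulation, but the classification you want to invoke (the paper's diploptigons) requires a second property that you never establish: the lines of sight from $p$ and from $q$ to the other interior points must be pairwise \emph{non-crossing}. This comes for free because these segments are edges of the planar triangulation $\mathcal{T}$, but it must be said, and it is not cosmetic: the paper's key Lemma \ref{lemma:diplo_lines} (the line through $p,q$ carries exactly $3$ lattice points and every parallel line at most $1$) is proved by exhibiting crossing diagonals of a trapezoid, so it fails for the larger class you describe, namely polygons where two points jointly see everything but no single point does. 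As stated, your plan asks for a classification of a strictly bigger family than the one the paper actually classifies, and that classification is not available. (Relatedly, the resulting diploptigons are not ``finitely many shapes'' but infinite families; the paper disposes of them parametrically by showing the relaxed polygon $P^{(-1)}$ acquires a non-integral vertex, so by Lemma \ref{lemma:not_lattice} a diploptigon is never an interior polygon. You leave the step ``check that none can serve as $P_\textrm{int}$'' as a black box, and this relaxation criterion is the missing mechanism.)

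Second, your proof of the ``Moreover'' clause does not go through for $6\leq g\leq 11$. In the non-standard embedding you split into two cases; in the case where $p$ does see $q$, you conclude $P_\textrm{int}$ is a panoptigon and declare it ``handled above.'' But ``above'' only gives the bound $g\leq 11$, which is vacuous in the range $6\leq g\leq 11$: panoptigons of those genera exist, so nothing prevents a non-standard embedding from arising there via a panoptigon interior with a triangulation in which $p$ and $q$ are each joined to every other interior point but not to each other. The paper needs a separate argument here (Remark \ref{remark:embeddings}): an exhaustive search through the panoptigons of genus $6$ through $11$ classified in \cite{panoptigons}, verifying that no such triangulation exists. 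Your proposal rules out only the sub-case where $p$ and $q$ are mutually invisible, so the uniqueness-of-embedding claim for $6\leq g\leq 11$ remains unproven.
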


The ``if'' direction of this theorem is handled by examples of tropical curves of genus $g$ from $4$ to $11$ whose skeleta are prism graphs. We already saw the genus $4$ prism in Figure \ref{figure:prism_g4_curve}, where it inherited the nonstandard embedding from a tropical curve. Examples for \(4\leq g\leq 11\) with the standard embedding of the prism graph appear in Figure \ref{figure:prism_curves_5_to_11}.
For the ``only if'' direction, we must consider the two embeddings of the prism graph:  the standard and the non-standard embedding.  The standard embedding has a big-face, and so could only arise from a polygon $P$ whose interior polygon is a panoptigon; by \cite{panoptigons} it follows that such a polygon $P$ (and thus the prism) has genus at most $11$.

\begin{figure}[h!]
    \centering
    \includegraphics[scale=1]{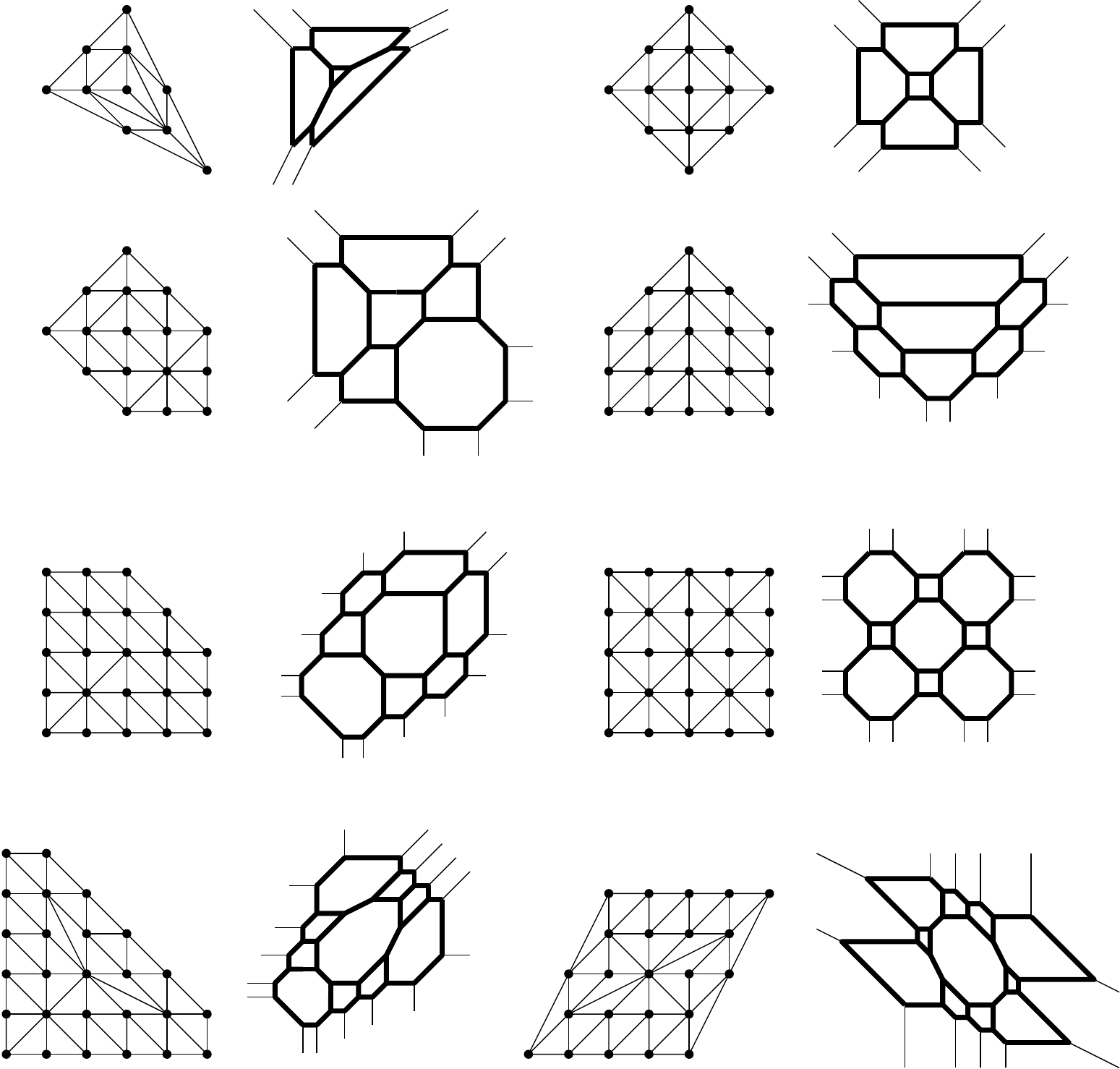}
    \caption{Tropical curves whose skeleta are prisms}
    \label{figure:prism_curves_5_to_11}
\end{figure}

It remains to consider the non-standard embedding, which has two bounded faces $F_1$ and $F_2$ that do not share an edge with one another, although each shares an edge with every other bounded faces. 
Dually, this means that a Newton polygon $P$ has two interior lattice points $p_1$ and $p_2$ with a triangulation that does not connect $p_1$ and $p_2$, but does connect both $p_1$ and $p_2$ to every other interior lattice point.  On the level of the interior polygon $P_\textrm{int}$, this means that either $P_\textrm{int}$ is a panoptigon, or $P_\textrm{int}$ has two lattice points that are not visible to one another, but that are visible to all other lattice points, such that none of these lines of sight cross.  In the latter case we call $P_\textrm{int}$ a \emph{diploptigon}.  By classifying these polygons, we will show that they cannot in fact ever be the interior polygon of a lattice polygon, yielding our desired  ``if and only if'' result.  The reason that we cannot have the nonstandard embedding for \(g\geq 5\) is discussed in Remark \ref{remark:embeddings}.


Our paper is organized as follows. In the Section \ref{sec:latticepolygons}, we provide the necessary background on lattice polygons, then in Section \ref{section:small_lattice_diameter} classify all diploptigons (indeed, all lattice polygons) of lattice diameter at most $2$. In Section \ref{section:high_lattice_diameter}, we classify the geometry of all diploptigons with lattice width greater than $3$. We then prove Theorem \ref{thm:prismgraph} using our complete diploptigon classification.

\bigskip

\noindent{\textbf{Acknowledgments.}}  The authors would like to thank Marino Echavarria, Max Everett, Shinyu Huang, Ayush Tewari, and Raluca Vlad for maybe helpful discussions on tropical curves and their skeleta.  The authors were supported by NSF grant DMS-1659037 and by Williams College.

\section{Lattice polygons}\label{sec:latticepolygons}

In this section, we outline important definitions and results pertaining to lattice visibility lattice polygons, including the notions of lattice width, relaxed polygons, and panoptigon points. 
Throughout the remainder of the paper, we will take \emph{polygon} to refer to a two-dimensional convex lattice polygon.

A \emph{unimodular transformation} is a map  $t:\mathbb{R}^2\rightarrow\mathbb{R}^2$ of the form $t(p)=Ap+b$, where $A=\left(\begin{smallmatrix}a&b\\c&d\end{smallmatrix}\right)$ is an integer matrix of determinant $\pm1$ and $b$ is a translation vector.  We say that a polygon $P'$ is \emph{equivalent} to a polygon $P$ if there exists a unimodular transformation $t$ with $t(P)=P'$.  One useful set of transformations are the \emph{shearing} transformations, which have $A$ of the form $\left(\begin{smallmatrix}1&N\\0&1\end{smallmatrix}\right)$ and $\left(\begin{smallmatrix}1&0\\N&1\end{smallmatrix}\right)$.

We say that a polygon $P$ is \emph{hyperelliptic} if its interior points are collinear, and \emph{non-hyperelliptic} if its interior points are not all collinear.  The convex hull of the interior lattice points of $P$ is called the \emph{interior polygon} of $P$, denoted $P_{\Int}$.  Thus a polygon is non-hyperelliptic if and only if $P_{\Int}$ is $2$-dimensional.  If $P$ is a polygon of genus $g$ that is contained in no other lattice polygon of genus $g$, we say that $P$ is a \emph{maximal} polygon.

Any lattice polygon $P$ can be defined as the intersection of finitely many half-planes $P=\bigcap_{i=1}^n \mathcal{H}_{i}$, where the half-plane $\mathcal{H}_i$ is the set of all points $(x,y)$ satisfying $a_ix+b_iy\leq c_i$, with $a_i,b_i,c_i$ relatively prime integers.  For each $\mathcal{H}_i$, define the corresponding \emph{relaxed half-plane $\mathcal{H}_i^{(-1)}$} to the the half-plane determined by the inequality $a_ix+b_iy\leq c_i+1$.  Then, the \emph{relaxed polygon $P^{(-1)}$ of $P$} is the intersection of the relaxed half-planes:
\[P^{(-1)}=\bigcap_{i=1}^n \mathcal{H}_{i}^{(-1)}. \]
The relaxation process, along with the corresponding relaxed polygons, is illustrated in Figure \ref{figure:relaxation_process} for two polygons with five lattice points.  We note that the relaxed polygon need not be a lattice polygon:  in the second case, the relaxation process introduces a vertex that does not have integer coordinates.  By the construction of $P^{(-1)}$, the lattice points interior to $P^{(-1)}$ are precisely the lattice points of $P$.

\begin{figure}[hbt]
    \centering
    \includegraphics[scale=1]{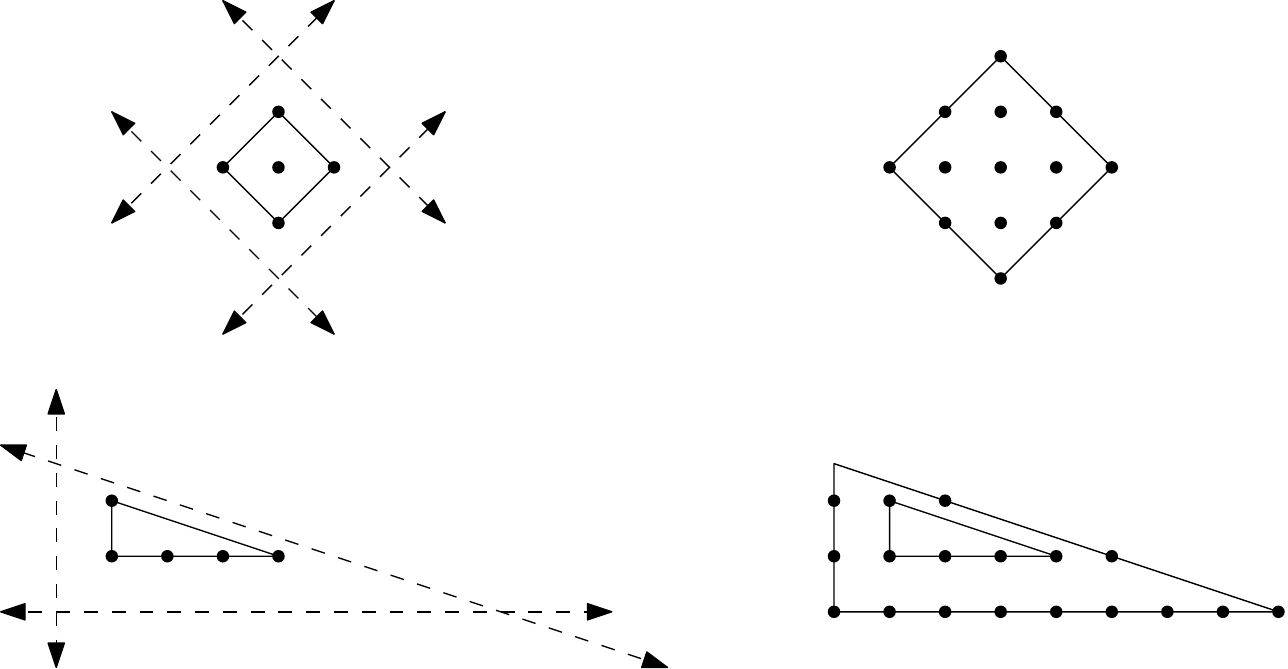}
    \caption{The relaxation process for two lattice polygons, each with five lattice points (left), along with the corresponding relaxed polygon (right).  The top relaxed polygon is a lattice polygon; the bottom is not.}
    \label{figure:relaxation_process}
\end{figure}

By \cite[Lemma 2.2.13]{Koelman}, if $P$ is a non-hyperelliptic lattice polygon, then $(P_{\textrm{int}})^{(-1)}$ is a lattice polygon containing $P$; in fact, it is the unique maximal lattice polygon containing $P$ that has the same interior polygon.  This implies the following lemma.

\begin{lemma}\label{lemma:not_lattice}
Let $Q$ be a lattice polygon.  Then $Q^{(-1)}$ is a lattice polygon if and only if $Q=P_\textrm{int}$ for some lattice polygon $P$.
\end{lemma}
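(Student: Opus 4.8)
The plan is to prove the two implications separately, with the candidate polygon $P = Q^{(-1)}$ carrying the forward direction and the quoted form of Koelman's lemma carrying the reverse one. Since the lemma is essentially a repackaging of those two facts, I expect the work to be bookkeeping rather than a genuine obstacle.

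For the reverse implication, I would suppose $Q = P_\Int$ for some lattice polygon $P$. Because our standing convention takes ``polygon'' to mean a genuinely two-dimensional object, $Q = P_\Int$ is two-dimensional, so $P$ is non-hyperelliptic. The consequence of \cite[Lemma 2.2.13]{Koelman} recorded just above then applies directly: $(P_\Int)^{(-1)} = Q^{(-1)}$ is a lattice polygon. The only thing to check here is that Koelman's non-hyperellipticity hypothesis is met, and this is immediate from the two-dimensionality of $Q$.

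For the forward implication, I would suppose $Q^{(-1)}$ is a lattice polygon and set $P := Q^{(-1)}$. First I note that $P$ is an honest two-dimensional lattice polygon: relaxation only enlarges the region, so $Q \subseteq Q^{(-1)}$ forces $Q^{(-1)}$ to be two-dimensional, and it is a lattice polygon by hypothesis. It then remains to verify $P_\Int = Q$. By the defining property of relaxation noted in the excerpt, the lattice points interior to $Q^{(-1)} = P$ are precisely the lattice points of $Q$. Hence $P_\Int$, the convex hull of the interior lattice points of $P$, is the convex hull of \emph{all} lattice points of $Q$; and since $Q$ is itself a lattice polygon (its vertices are lattice points), this convex hull is exactly $Q$. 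Thus $P_\Int = Q$, with $P$ the desired lattice polygon.

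I do not anticipate a serious difficulty; the two steps demanding the most care are confirming the non-hyperellipticity hypothesis in the reverse direction via the two-dimensionality convention, and, in the forward direction, being precise that the interior lattice points of $Q^{(-1)}$ coincide with the lattice points of $Q$ and that their convex hull recovers $Q$ exactly rather than some smaller polygon.
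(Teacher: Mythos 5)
Your proof is correct and matches the paper's intended argument: the paper derives this lemma directly from \cite[Lemma 2.2.13]{Koelman} together with the observation that the lattice points interior to $Q^{(-1)}$ are precisely the lattice points of $Q$, which is exactly the two-step derivation you spell out (Koelman for the reverse direction, taking $P=Q^{(-1)}$ and recovering $Q$ as its interior polygon for the forward direction). Your added care about the two-dimensionality convention ensuring non-hyperellipticity is a detail the paper leaves implicit, but it is the right justification.
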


It is worth remarking that \(Q^{(-1)}\) need not equal \(P\).  For instance, if \(Q\) is an isosceles right triangle with base and height \(1\), than \(Q^{(-1)}\) is such a triangle with base and height \(4\); but there are many other polygons \(P\) with \(P_\textrm{int}=Q\), such as all those pictured in Figure \ref{figure:trapezoid_argument}.


We now recall terminology relevant to lattice point visibility, including the following key definition.

\begin{definition}\label{defn:visibility}  We say two lattice points $p$ and $q$ are \emph{visible} to one another if the line segment $\overline{pq}$ contains no other lattice points besides $p$ and $q$.
\end{definition}

 Following \cite{panoptigons}, a \emph{panoptigon} is a polygon $P$ such that every lattice point $q \in P\cap\mathbb{Z}^2$ is visible from some fixed $p \in P\cap\mathbb{Z}^2$.  We say that $P$ is a \emph{diploptigon} if it is not a panoptigon, but it contains lattice points $p$ and $q$ such that all lattice points in $P$ besides $q$ are visible to $p$; all besides $p$ are visible to $q$; and that none of these lines of sight cross.  In other words, if we draw all line segments from $p$ to lattice points besides $q$, and from $q$ to lattice points besides $p$, no two of those line segments cross, and none contain any lattice points besides their end points.  If $P$ is a diploptigon, then any pair $\{p,q\}$ satisfying these conditions is called a \emph{pair of diploptigon points}; when the pairing of $p$ and $q$ is clear, we simply refer to them as diploptigon points.  Several diploptigons appear in Figure \ref{figure:diplo_examples}, each with a pair of diploptigon points circled; note that this pair need not be unique, and also that lines of sight can lie along the edges of the polygon.

\begin{figure}[hbt]
    \centering
    \includegraphics[scale=1]{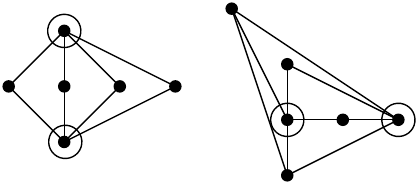}
    \caption{Two diploptigons, each with a pair of diploptigon points circled and lines of sight drawn}
    \label{figure:diplo_examples}
\end{figure}

The following result will be useful in proving that certain polygons are not diploptigons.

\begin{lemma}\label{lemma:diplo_lines}
If $P$ is a diploptigon with diploptigon points \(p\) and \(q\), then the line $L$ passing through \(p\) and \(q\) satisfies $|L\cap P\cap\mathbb{Z}^2|=3$, and for every other line $L'$ parallel to $L$ we have $|L'\cap P\cap\mathbb{Z}^2|\leq 1$.
\end{lemma}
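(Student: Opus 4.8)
The plan is to handle the two claims separately, using the defining properties of diploptigon points throughout.

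For the first claim, I would begin by observing that $p$ and $q$ cannot be visible to one another: if they were, then $p$ would see every lattice point of $P$ (it sees all but $q$ by hypothesis, and now also $q$), making $P$ a panoptigon and contradicting the definition of a diploptigon. Hence the segment $\overline{pq}$ contains at least one lattice point in its interior, and I claim it contains exactly one. Indeed, if $r_1,\dots,r_k$ were the interior lattice points of $\overline{pq}$ listed in order starting from $p$, then $r_1$ is visible from $p$, but each later $r_i$ is blocked by $r_1$ and so is invisible from $p$; since every $r_i$ is a lattice point different from $q$, the hypothesis that $p$ sees all such points forces $k=1$. Writing $r$ for this unique interior point, we have $\{p,r,q\}\subseteq L\cap P\cap\mathbb{Z}^2$. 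To see there is nothing else, I would suppose $s\in L\cap P\cap\mathbb{Z}^2$ lies outside $\overline{pq}$. Being collinear with $p$ and $q$, the point $s$ has either $q$ between $p$ and $s$, or $p$ between $q$ and $s$; in the first case $\overline{ps}$ contains $q$, so $p$ cannot see $s$, while $s\neq q$, a contradiction, and the second case is symmetric. Thus $|L\cap P\cap\mathbb{Z}^2|=3$.

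For the second claim, I would argue by contradiction. Suppose some line $L'$ parallel to but distinct from $L$ contains two lattice points $a,b\in P$. Since $L'\neq L$, neither $a$ nor $b$ equals $p$ or $q$, so all four segments $\overline{pa},\overline{pb},\overline{qa},\overline{qb}$ are genuine lines of sight (from $p$ to points other than $q$, and from $q$ to points other than $p$). Applying a unimodular transformation, I may assume $L$ is the $x$-axis and $L'$ is the horizontal line $y=h$ for some nonzero $h$; write $p=(p_1,0)$, $q=(q_1,0)$, $a=(a_1,h)$, $b=(b_1,h)$, where $p_1\neq q_1$ and $a_1\neq b_1$.

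The heart of the argument is then a short crossing computation. Parametrizing $\overline{pa}$ and $\overline{qb}$ by height, their $x$-coordinates agree at some height strictly between $0$ and $h$ exactly when $(p_1-q_1)(a_1-b_1)<0$, while $\overline{pb}$ and $\overline{qa}$ cross exactly when $(p_1-q_1)(a_1-b_1)>0$. Since both factors are nonzero, one of these two cases must occur, producing two lines of sight that meet in their open interiors; as these segments share no endpoint, this is a genuine crossing, contradicting the requirement that no two lines of sight cross. Hence $L'$ contains at most one lattice point of $P$. I expect the main obstacle to be making this crossing step fully rigorous—confirming the intersection lies in the open interiors of both segments rather than at a shared endpoint, and checking that the degenerate configurations (such as $a$ or $b$ lying directly above $p$ or $q$) are correctly absorbed by the strict inequalities $p_1\neq q_1$ and $a_1\neq b_1$.
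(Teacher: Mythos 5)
Your proof is correct and follows essentially the same route as the paper's: the first part uses the same observation that mutual invisibility of $p$ and $q$ (forced by $P$ not being a panoptigon) together with the visibility hypotheses pins down exactly three lattice points on $L$, and the second part establishes the same contradiction the paper gets by noting that $p,q,a,b$ form a trapezoid whose diagonals are crossing lines of sight. Your sign computation $(p_1-q_1)(a_1-b_1)\lessgtr 0$ is simply a coordinate-level verification of that synthetic ``diagonals of a trapezoid must cross'' step, and it is sound, including in the degenerate configurations you flag.
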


\begin{proof}
First we show that  $|L\cap P\cap\mathbb{Z}^2|=3$.  Certainly $L$ contains the two points $p$ and $q$; if it contains no other lattice points in $P$, then we would have that $p$ and $q$ are panoptigon points, implying that $P$ is a panoptigon rather than a diploptigon.  Thus $|L\cap P\cap\mathbb{Z}^2|\geq 3$.  If $|L\cap P\cap\mathbb{Z}^2|\geq 4$, then at least one lattice point outside of $\{p,q\}$ would be invisible to one of $p$ or $q$, contradicting our visibility assumptions.  Thus $|L\cap P\cap\mathbb{Z}^2|= 3$.

Let $L'$ be distinct from and parallel to $L$, and suppose for the sake of contradiction that $L'\cap P\cap\mathbb{Z}^2$ contains two points $r$ and $s$.  Since $L'\neq L$, we have that $p,q,r,$ and $s$ form a trapezoid.  Among the four lines of sight from $\{p,q\}$ to $\{r,s\}$, a pair of the lines of sight form the diagonals of the parallelogram.  These cross, contradicting the assumption that $p$ and $q$ are diploptigon points.  Thus $|L'\cap P\cap\mathbb{Z}^2|\leq 1$, as claimed.
\end{proof}

A helpful set of tools in classifying diploptigons will be
the lattice diameter of a polygon $P$, denoted by $\ld(P)$.  This is the largest $d$ such that there exist $d+1$ collinear points in $P\cap\mathbb{Z}^2$. 
A somewhat dual notion is the \emph{lattice width} of $P$, written $\lw(P)$. This is the smallest $w$ such that there exists a polygon $P'$ equivalent by a unimodular transformation to $P$ contained in the horizontal strip $\mathbb{R}\times [0,w]$.

We close this section by recalling two useful properties of lattice diameter and lattice width.

\begin{lemma}[Equation (2) and Theorem 3 in \cite{lattice_diameter}]\label{lemma:ld_facts}
Let $P$ be a lattice polygon. Then
\begin{itemize}
    \item[(a)] $\lw(P) \leq \left\lfloor \frac{4}{3} \ld(P) \right\rfloor + 1$, and
    \item[(b)] $| P \cap \Z^2 | \leq ( \ld(P) + 1)^2$.
\end{itemize}
\end{lemma}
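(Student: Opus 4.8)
The plan is to prove both bounds by studying how the lattice points of $P$ distribute across families of parallel lattice lines, since $\ld$ and $\lw$ capture exactly the two extremes of this distribution: $\ld(P)+1$ bounds the number of lattice points on any single line, while $\lw(P)+1$ bounds the number of parallel lattice lines, in a suitable direction, needed to meet every lattice point. I would establish part (b) first by a direct covering argument, and then treat part (a) as a flatness-type inequality, which I expect to be the harder of the two.

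For part (b), I would fix a primitive direction realizing the lattice width, so that after a unimodular transformation $P$ lies in the strip $\R\times[0,w]$ with $w=\lw(P)$ and all its lattice points occupy the horizontal lines $y=0,1,\dots,w$. Each such line meets $P$ in a set of collinear lattice points, hence in at most $\ld(P)+1$ of them, and there are at most $w+1$ lines, giving
\[
|P\cap\Z^2|\;\le\;(\lw(P)+1)\,(\ld(P)+1).
\]
When $\lw(P)\le\ld(P)$ this already yields the claimed bound $(\ld(P)+1)^2$, with equality for the square $[0,d]^2$, where $\lw=\ld=d$. The remaining case $\lw(P)>\ld(P)$ is the delicate one: here the naive line count overshoots, and one must show that the slices cannot all be simultaneously full. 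I would handle it by invoking part (a), which confines this regime to $\ld(P)<\lw(P)\le\lfloor\frac43\ld(P)\rfloor+1$, so that $P$ is ``short and fat''; one then combines the concavity of the slice-length sequence forced by convexity with the constraint that \emph{every} direction (not just the horizontal) sees at most $\ld(P)+1$ collinear points, ruling out a full grid. The extremal configurations in this regime are small triangles such as $\mathrm{conv}\{(0,0),(2,1),(1,2)\}$, which has four lattice points and meets the bound with equality at $\ld=1$.

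For part (a), the statement is a flatness-type inequality bounding lattice width by lattice diameter, and I expect this to be the main obstacle. The strategy I would follow is to normalize a longest lattice segment to be horizontal of lattice length $d=\ld(P)$, and then bound the lattice width transverse to it. The key tension is that a point of $P$ at large vertical height, together with the endpoints of the horizontal segment, produces diagonal lattice segments whose lengths are constrained to be at most $d$; tracking the resulting greatest-common-divisor and convexity constraints limits how the polygon can spread while keeping all collinear-point counts at most $d+1$. Optimizing over the possible shapes is where the constant $\frac43$ emerges, with equality approached by a one-parameter family of triangles generalizing the example above. Because the sharp form requires a careful classification of the near-extremal polygons rather than a single clean estimate, I would ultimately appeal to the detailed analysis of \cite{lattice_diameter}, treating the covering bound of part (b) and this flatness bound of part (a) as the two inputs our later sections need.
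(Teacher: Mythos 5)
The paper offers no proof of this lemma at all: it is imported wholesale from B\'ar\'any and F\"uredi \cite{lattice_diameter} (their Equation~(2) and Theorem~3), and since your proposal likewise ends by appealing to that same reference for part (a) and for the hard case of part (b), you are in effect taking the same route as the paper. The one self-contained piece you add --- the strip-and-slice bound $|P\cap\Z^2|\leq(\lw(P)+1)(\ld(P)+1)$, which settles (b) whenever $\lw(P)\leq\ld(P)$ --- is correct, but your sketches for the regime $\lw(P)>\ld(P)$ and for the flatness inequality (a) are not proofs (ruling out ``full grids'' via concavity of slice lengths, and extracting the constant $\tfrac{4}{3}$ from gcd and convexity constraints, is precisely where the substantive work of \cite{lattice_diameter} lies), so the correctness of your write-up, like the paper's, rests entirely on the citation.
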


\section{Polygons with small lattice diameter}
\label{section:small_lattice_diameter}

In this section we will classify all polygons with lattice diameter at most $2$, up to equivalence.  We will then pick out those that are diploptigons.  

For the case of $\ld(P)=1$, Lemma \ref{lemma:ld_facts}(b) implies that $ | P \cap \Z^2 |\leq 4$.  Any polygon has at least $3$ lattice boundary points, so such a $P$ must have genus $g=0$ or $g=1$.  Consulting a classification of all polygons of genus at most $1$ as in \cite{movingout}, we see that there are precisely $4$ such polygons, $3$ of which have lattice diameter equal to $1$.  These are pictured in Figure \ref{figure:4_lattice_points}.

\begin{figure}[hbt]
    \centering
    \includegraphics[scale=1]{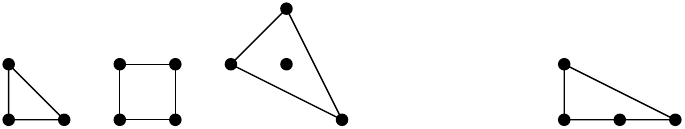}
    \caption{All polygons with at most $4$ lattice points, up to equivalence; the three on the left have lattice diameter $1$}
    \label{figure:4_lattice_points}
\end{figure}

We now move on to those polygons $P$ with $\ld(P)=2$, which have $|P\cap\mathbb{Z}^2|\leq (2+1)^3=9$.  By Lemma \ref{lemma:ld_facts}(a) we know that these polygons have $\lw(P)\leq\left\lfloor\frac{4}{3}\cdot 2\right\rfloor+1=3$.  First assume $\lw(P)\leq 2$. All lattice polygons of lattice width at most $2$ are classified in \cite[Theorem 2.3]{panoptigons}, a result modeled off a similar classifications found in \cite{movingout} and  \cite{Koelman}.  Polygons of lattice width $1$ and lattice diameter at most $2$ are (possibly degenerate) trapezoids of height $1$ with at most $3$ lattice points on each horizontal edge; and those of lattice width $2$ are hyperelliptic polygons of genus  at most $2$, since those of higher genus have at least $4$ lattice points at some common height.  Those that end up having lattice diameter equal to $2$ are pictured in Figure \ref{figure:lattice_width_leq2}.

\begin{figure}[hbt]
    \centering
    \includegraphics[scale=0.7]{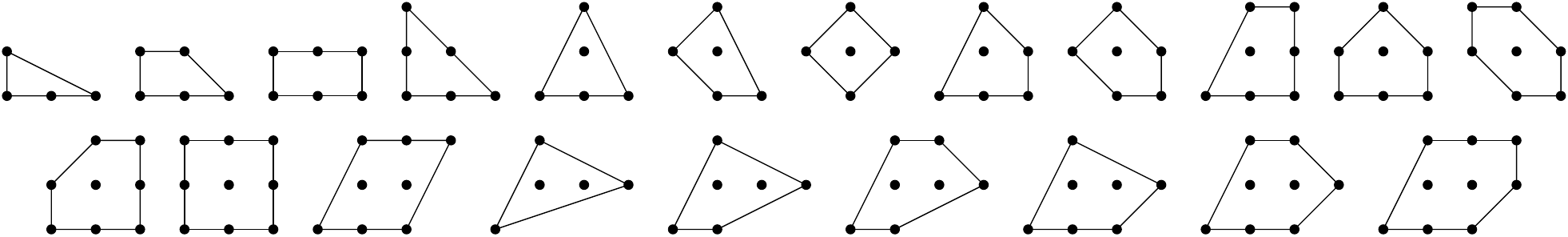}
    \caption{Polygons of lattice diameter $2$ and lattice width at most $2$}
    \label{figure:lattice_width_leq2}
\end{figure}

Now we find those polygons with lattice diameter $2$ and lattice width $3$.  As argued in the proof of  \cite[Proposition A.1]{panoptigons}, such a polygon $P$ must have one of the first two polygons in Figure \ref{figure:4_lattice_points} as $P_{\Int}$, and due to its lattice width must be a subpolygon of one of the polygons appearing in Figure \ref{figure:candidate_superpolygons}.  For each of these two polygons, we run through a series of arguments to find all lattice diameter-$2$ subpolygons, up to equivalence; these are diagrammed in Figures \ref{figure:trapezoid_argument} and \ref{figure:square_argument}.  The key argument in each case is that we cannot have $4$ collinear lattice points

\begin{figure}[hbt]
    \centering
    \includegraphics[scale=1]{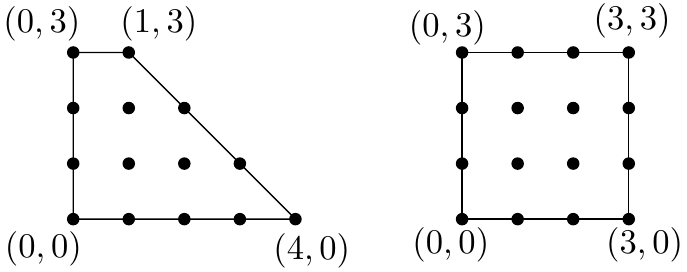}
    \caption{Candidate superpolygons of a polygon of lattice diameter $2$ and lattice width $3$}
    \label{figure:candidate_superpolygons}
\end{figure}

For the trapezoid, whose argument is diagrammed in in Figure \ref{figure:trapezoid_argument}, we know that we must omit at least one of the points $(0,1)$ and $(3,1)$, since there cannot be $4$ points at height $1$. These points are symmetric, so without loss of generality we will omit $(0,1)$.  This means we either have to omit the points $(0,2)$ and $(0,3)$, or the point $(0,0)$.  In the first case, we can no longer omit $(1,3)$, since we must keep at least one point with \(y\)-coordinate \(3\) to preserve the interior polygon; so the only way to avoid having $4$ lattice points along the diagonal edge is to omit $(4,0)$.  This yields the triangle with vertices at $(0,0)$, $(3,1)$ and $(3,1)$; this does indeed have lattice diameter $2$, and contains no proper subpolygons with the same interior polygon.  In the second case, with $(0,0)$ omitted, we must omit at least one more point at height $0$.  It cannot be $(1,0)$, and so $(4,0)$ must be omitted.  Similarly, some point with $x$-coordinate $1$ must be omitted, and it cannot be $(1,0)$, so it must be $(1,3)$.  Finally, at least one of the points $(0,3)$, $(1,2)$, $(2,1)$, and $(3,0)$ must be omitted.  It cannot be $(0,3)$, so it must be $(3,0)$.  This yields finally a polygon with lattice diameter $2$.  We can still take subpolygons by removing some subset of $\{(0,2),(2,0),(2,2)\}$; up to symmetry it only matters how many are removed, yielding $3$ more polygons of lattice diameter $2$.  

\begin{figure}[hbt]
    \centering
    \includegraphics[scale=1]{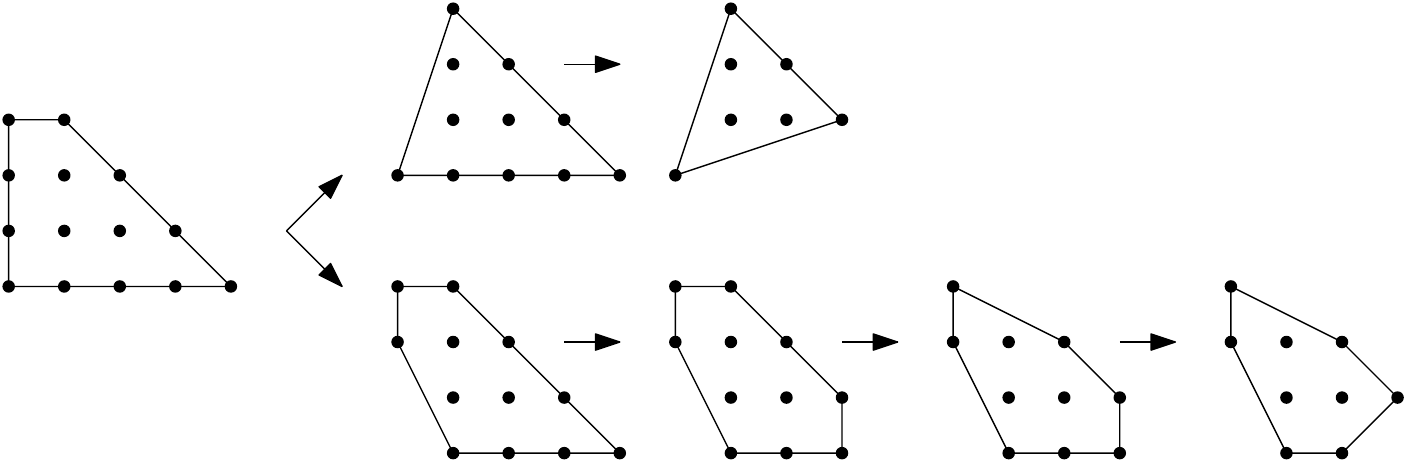}
    \caption{The argument for subpolygons of the trapezoid}
    \label{figure:trapezoid_argument}
\end{figure}

This argument for the square is diagrammed in Figure \ref{figure:square_argument}.  We claim that three vertices must be omitted.  Certainly one must be omitted from height $0$; without loss of generality, say it is $(0,0)$.  From there one must be omitted from the still-complete diagonal; without loss of generality, say it is $(0,3)$.  Finally, one must be omitted from the rightmost column; without loss of generality, say it is $(3,3)$.  Since there are $4$ lattice points with $x$-coordinate $1$, either $(1,0)$ or $(1,3)$ must be omitted. If it is $(1,0)$, then we split into two subcases: one where we remove $(2,0)$, and one where we remove $(2,3)$.  If it is $(2,0)$, then the only way to avoid having $4$ points at heights $1$ and $2$ is to omit $(3,1)$ and $(3,2)$; this yields a pentagon that has lattice diameter $2$, with vertices at $(0,1)$, $(0,2)$, $(1,3)$, $(2,3)$, and $(3,0)$.  We can remove exactly one of $(0,2)$ and $(1,3)$ to obtain a subpolygon with lattice diameter $2$; these polygons are equivalent to one another, and so contribute one more.  If instead of $(2,0)$ we removed $(2,3)$, we must remove $(0,2)$ and $(3,1)$ to avoid $4$ collinear points, yielding a square that has lattice diameter $2$.  Moving back, if instead of $(1,0)$ we instead remove $(1,3)$, then the only way to avoid having $4$ lattice points with $x$-coordinate $2$ is to omit $(1,0)$ and $(2,0)$; this pushes us back into the prior case pictured at the top of Figure \ref{figure:square_argument}, and so won't find any lattice diameter $2$ polygons we have not already found.

\begin{figure}[hbt]
    \centering
    \includegraphics[scale=1]{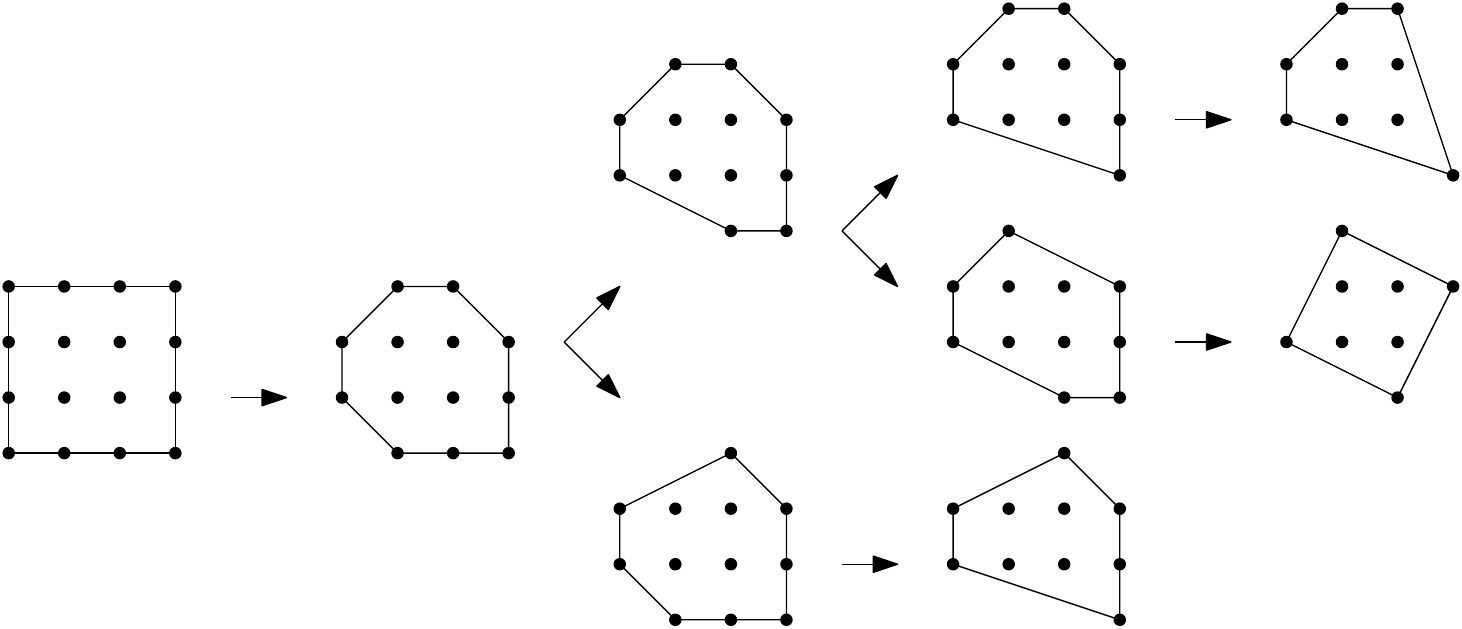}
    \caption{The argument for subpolygons of the square}
    \label{figure:square_argument}
\end{figure}

We thus have precisely eight polygons with lattice diameter $2$
 and lattice width $3$, namely those illustrated in Figure \ref{figure:polygons_ld2lw3}.
 
 \begin{figure}[hbt]
    \centering
    \includegraphics[scale=0.8]{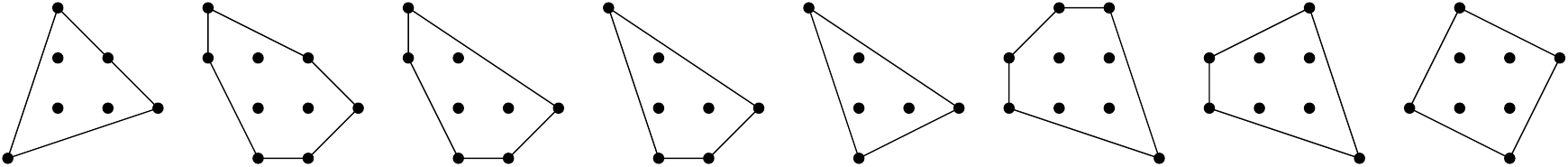}
    \caption{The polygons of lattice diameter $2$ and lattice width $3$}
    \label{figure:polygons_ld2lw3}
\end{figure}

Having found all $32$ polygons of lattice diameter at most $2$, we determine which are diploptigons.  All those from Figures \ref{figure:4_lattice_points} and \ref{figure:lattice_width_leq2} are panoptigons, and so are omitted from our consideration.  Of those polygons in Figure \ref{figure:polygons_ld2lw3}, we see that the first is a panoptigon, and the fifth is a diploptigon; for instance, we may choose $p$ and $q$ to be the points $(1,1)$ and $(3,1)$.  For the other six polygons, one can check that for every line passing through three lattice points, there exists a parallel line passing through two or more lattice points.  By Lemma \ref{lemma:diplo_lines}, this implies that none of them is a diploptigon.  We summarize this in the following proposition.

\begin{proposition}\label{prop:diplo_ld2}
Up to equivalence, there is exactly one diploptigon of lattice diameter at most $2$, namely the triangle with vertices at $(1,0)$, $(3,1)$, and $(0,3)$.
\end{proposition}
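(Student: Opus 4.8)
The plan is to build directly on the classification carried out above, which exhausts all $32$ polygons of lattice diameter at most $2$ up to equivalence. Since a diploptigon is by definition \emph{not} a panoptigon, the first move is to discard every polygon already known to be a panoptigon. All polygons appearing in Figures \ref{figure:4_lattice_points} and \ref{figure:lattice_width_leq2} --- those with at most four lattice points and those of lattice width at most $2$ --- admit a single lattice point from which all others are visible, so each is a panoptigon and hence cannot be a diploptigon. This reduces the problem to the eight polygons of lattice diameter $2$ and lattice width $3$ collected in Figure \ref{figure:polygons_ld2lw3}.

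For these eight candidates, the key tool is Lemma \ref{lemma:diplo_lines}, which supplies a clean necessary condition: if $P$ is a diploptigon with diploptigon points $p,q$, then the line $L$ through $p$ and $q$ meets $P$ in exactly three lattice points, and every line parallel to $L$ meets $P$ in at most one lattice point. I would therefore, for each of the eight polygons, enumerate the finitely many lattice-line directions carrying three collinear lattice points, and for each such direction test whether some parallel line carries two or more lattice points. Whenever this happens, Lemma \ref{lemma:diplo_lines} rules the polygon out immediately. A direct inspection eliminates six of the eight polygons in this way, and the first polygon is seen to be a panoptigon, leaving a single surviving candidate.

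It then remains to confirm that the surviving polygon --- the triangle with vertices $(1,0)$, $(3,1)$, and $(0,3)$, whose lattice points are these three vertices together with the interior points $(1,1)$, $(2,1)$, and $(1,2)$ --- is genuinely a diploptigon. Taking $p=(1,1)$ and $q=(3,1)$, the segment $\overline{pq}$ lies on the horizontal line $y=1$ and passes through $(2,1)$, so $p$ and $q$ are not visible to each other and $P$ is not a panoptigon; one then checks that $p$ sees every lattice point other than $q$, that $q$ sees every lattice point other than $p$, and that none of these lines of sight cross. The main obstacle is this final verification: whereas ruling out the six non-diploptigons reduces to a mechanical parallel-line count via Lemma \ref{lemma:diplo_lines}, confirming the diploptigon property for the triangle requires checking both that no interior lattice point blocks any single sightline and that the global non-crossing condition holds among all the sightlines simultaneously. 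I expect the non-crossing check to be the most delicate point, though the smallness of the polygon keeps it entirely tractable.
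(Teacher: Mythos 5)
Your proposal is correct and follows essentially the same route as the paper: discard the panoptigons from the earlier classification, eliminate six of the eight lattice-width-$3$ candidates via Lemma \ref{lemma:diplo_lines}, and verify the remaining triangle is a diploptigon using the same witness pair $p=(1,1)$, $q=(3,1)$. The only difference is that you spell out the final visibility and non-crossing check more explicitly than the paper does, which is a reasonable elaboration rather than a different argument.
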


\section{Diploptigons with lattice diameter at least $3$}
\label{section:high_lattice_diameter}

We now classify all diploptigons of lattice diameter $3$ or more. Our main strategies mirror the proof of \cite[Theorem 1.1]{panoptigons}. Let $P$ be such a diploptigon, containing points $p$ and $q$ from which every lattice point is visible except for $q$ and $p$, respectively.  After translating, we may assume that $p$ is the origin $(0,0)$.  Since $\ld(P)\geq 3$, we know that there exist $4$ collinear lattice points in $P$.  After applying a unimodular transformation that fixes the origin, we may assume that they lie on a horizontal line. Thus $P$ contains points of the form $(a+n, b)$, $n \in \{0,1,2,3\}$, all of which are visible from the origin; note that \(q\) cannot be visible to \(p\), and so is not among these points. We know that $b \neq 0$, as the only points visible from $p$ at height $0$ are itself, $(1,0)$, and $(-1,0)$. 
We also cannot have $b$ even:  at an even nonzero height, every other point (namely those with an even $x$-coordinate) is invisible from the origin, so it is impossible to have $4$ visible points in a row.  Thus $b$ must be odd.

By performing a rotation if necessary, we may assume that $b\leq -1$.  We claim that $b=-1$.  For the sake of contradiction, suppose not, so that $b\leq -3$.  Consider the triangle $T$ with vertices at $(0,0)$, $(a,b)$, and $(a+3,b)$.  By convexity, $T\subset P$.  Since $P$ is a diploptigon with $p=(0,0)$ as one of its diploptigon points, we know that every lattice point in $T$ is either visible, or is the other diploptigon point $q$. Let $L$ be the line defined by $y=b+1$, and consider the line segment $L\cap T$.  Since the cross-sectional width of $T$ is equal to $3$ at height $b$ and $0$ and height $0$ and decreases linearly, we have that the width of $L\cap T $ is $3-\frac{1}{|b|}\geq 3-\frac{1}{3}>2$.  Since this horizontal line segment has width greater than $2$ and has integral height, it must contain at least $2$ consecutive lattice points.  One of them must have an even $x$-coordinate, and so be invisible to the origin.  This point is contained in $T$, and thus in $P$; because it is invisible to $p=(0,0)$, it must be $q$.  This means that the line segments connecting $q$ to $(a,b)$, $(a,b+1)$, $(a,b+2)$, and $(a,b+3)$ must not cross any of the line segments connecting $(0,0)$ to the same set of points.  But this is impossible: the line segment connecting $(0,0)$ to $(a,b+1)$ must cross one of the line segments connecting $q$ to $(a,b)$ or $(a,b+3)$, yielding a contradiction.  Thus we have that $b=-1$.

We now have our diploptigon $P$ with $p$ at the origin and the points $a_1$, $a_2$, $a_3$, and $a_4$ at $y=-1$.  Consider the other diploptigon point $q=(c,d)$.  In order for its lines of sight not to cross those of $p$, we need $d\leq -2$.  Suppose $d<-2$. A symmetric argument to the above (with $p$ and $q$ swapped) shows that $d$ must be even, and that if $d\leq -4$ the convex hull of $q$ with $a_1,\ldots,a_4$ would introduce a point invisible to $q$, with negative $y$-coordinate; since $p$ is the only point not visible to $q$, this is impossible, so $d=-2$.  As $q$ is not visible to $p$, we have $q=(2N,-2)$ for some $N$,  Applying the shearing transformation $\begin{pmatrix} 1 & N \\ 0 & 1 \end{pmatrix}$, we have that $q=(0,-2)$.

We now deal with three cases, based on where lattice points other than $p$ and $q$ can be found.  The first two cases give us infinite families of diploptigons, as well as an exceptional diploptigon; the third case gives us none. The resulting diploptigons are illustrated in Figure \ref{figure:diplo_types_ld3}, along with the lines of sight from the diploptigon points $(0,0)$ and $(0,-2)$.

\begin{figure}[hbt]
    \centering
    \includegraphics[scale=1]{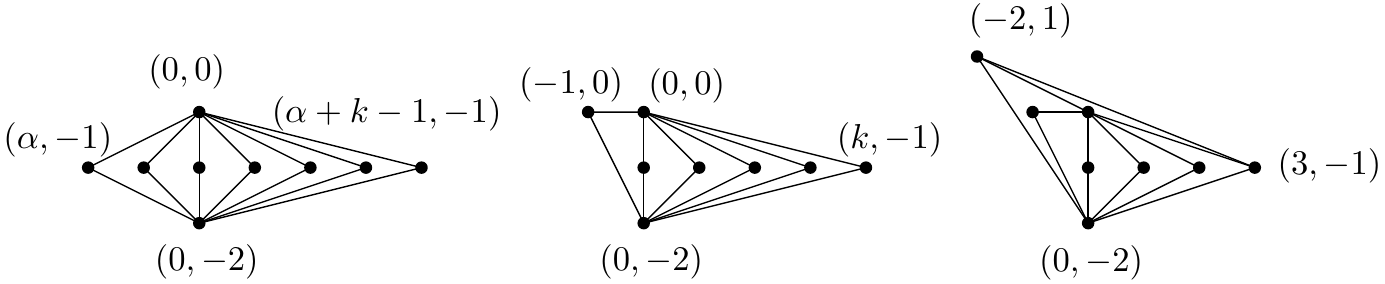}
    \caption{The three types of diploptigons of lattice diameter at least $3$}
    \label{figure:diplo_types_ld3}
\end{figure}

\begin{itemize}
    \item \textbf{Case 1: additional points are only at height $-1$.}
 In this case we have the (possibly degenerate) quadrilaterals with vertices at $(0,0)$, $(0,-2)$, $(x_1,-1)$, and $(x_2,-1)$, where $x_1\leq 0\leq x_2$ and $x_2-x_1\geq 3$.
    \item \textbf{Case 2: additional points appear on at least one height among $0$ and $-2$.}  
Let $a_1=(\alpha,-1),\ldots,a_k=(\alpha+k-1,-1)$ be the points at height $-1$.  By assumption, $k\geq 4$, and by convexity $\alpha\leq 0\leq \alpha+k-1$.  Let us now consider which other points at heights $0$ and $-2$ could be included in $P$.  Certainly they would have to be visible to both $p$ and $q$, and so would have to have $x$-coordinate $\pm 1$.  Without loss of generality, if such a point is included, we may assume that it is $(-1,0)$.  We then have that $\alpha=0$: otherwise there would be two lattice points on a vertical line, contradicting Lemma \ref{lemma:diplo_lines}. The same argument shows that we may not include $(-1,-2)$; we can also rule out $(1,0)$ and $(1,-2)$ by lines of sight.

We then ask whether $P$ could contain any other lattice point $(e,f)$.  We could not have $f\leq -2$:  such a point would either create a conflict with a line of sight, or would introduce points invisible to $q$ at height $-2$.  Thus we need $f\geq 1$.  We also need the point $(e,f)$ to lie strictly above the line $y=-x-2$: otherwise we would introduce the point $(-1,-1)$ by convexity since we already have $(0,-2)$ in $P$, violating Lemma \ref{lemma:diplo_lines}.  Similarly, we need $(e,f)$ to lie strictly below the line $y=-\frac{x}{2}+\frac{1}{2}$: otherwise we would introduce the point $(1,0)$, again violating Lemma \ref{lemma:diplo_lines}.  The only point satisfying all these criteria is $(-2,1)$, as illustrated in Figure \ref{figure:diplo_lines}.  However, this point can only be included if $k=4$: otherwise the presence of the point $(4,-1)$ will introduce $(1,1)$.  Thus we can add an additional point if and only if $k=4$, and this must be the point $(-1,1)$.  So, we find an infinite family of diploptigons when there are no points outside of the strip $-2\leq y\leq 0$, and a single diploptigon when there are.

\begin{figure}[hbt]
    \centering
    \includegraphics[scale=1]{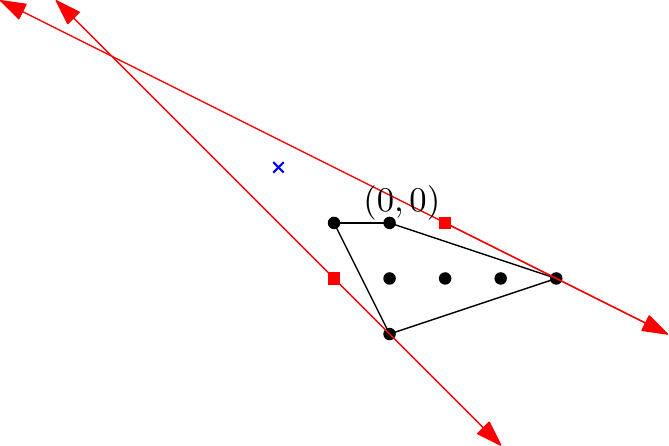}
    \caption{Constraints on where an additional lattice point may be included in Case 2}
    \label{figure:diplo_lines}
\end{figure}


\item  \textbf{Case 3: additional points appear not at heights $0$ and $-2$, but somewhere beyond there.}  Suppose that there are no additional lattice points at heights $0$ and $-2$.  Then the cross-sectional width of $P$ at height $-1$ is at least $3$, and at heights $0$ and $-2$ is strictly smaller than $2$.  It follows that its width at heights $-3$ and $1$ is strictly smaller than $1$, meaning that there is at most one lattice point at such a height, and that there are no lattice points beyond that.  Suppose there were a lattice point at height $1$.  In order not to introduce either of the lattice points $(\pm 1,0)$ via convexity, the only possible points are $(\pm 1,1)$.  Without loss of generality assume it is $(-1,1)$. This combined with the point $(3,-1)$ introduces the point $(0,1)$ by convexity, contradicting the lack of points besides $p$ at height $1$. Thus we find no diploptigons in this case.
\end{itemize}

We have now completed a classification of all diploptigons:  there is the single one of lattice diameter at most $2$ from the previous section, as well as the infinite collection of polygons depicted in Figure \ref{figure:diplo_types_ld3}.  We will now prove that none of these diploptigons can ever be the interior polygon of a lattice polygon.




\begin{proposition}\label{diplolemma}
If $Q$ is a convex lattice polygon, then $Q_{\textrm{int}}$ cannot be a diploptigon.
\end{proposition}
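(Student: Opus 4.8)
The plan is to invoke Lemma~\ref{lemma:not_lattice}, which reduces the proposition to a statement about relaxations: a lattice polygon $Q$ satisfies $Q = P_\textrm{int}$ for some lattice polygon $P$ if and only if $Q^{(-1)}$ is itself a lattice polygon. Hence it suffices to show that for every diploptigon $Q$ produced by our classification---the exceptional triangle of Proposition~\ref{prop:diplo_ld2}, together with the two infinite families and the exceptional polygon of Figure~\ref{figure:diplo_types_ld3}---the relaxed polygon $Q^{(-1)}$ fails to be a lattice polygon. I would do this by exhibiting in each case a single vertex of $Q^{(-1)}$ with a non-integer coordinate.

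The tool is the following description of how relaxation moves a vertex. If $v$ is a vertex of $Q$ at which two edges meet, written in the relatively prime form $a_1 x + b_1 y \le c_1$ and $a_2 x + b_2 y \le c_2$ used in the definition of relaxation, then the corresponding vertex of $Q^{(-1)}$ is the solution of $a_1 x + b_1 y = c_1 + 1$ and $a_2 x + b_2 y = c_2 + 1$. Writing $M = \left(\begin{smallmatrix} a_1 & b_1 \\ a_2 & b_2 \end{smallmatrix}\right)$, this relaxed vertex equals $v + M^{-1}(1,1)^{T}$; since $v$ is already a lattice point, the relaxed vertex is integral exactly when $M^{-1}(1,1)^{T}$ is. In practice I would simply solve the two relaxed edge equations at a well-chosen vertex and read off a non-integer coordinate.

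Carrying this out case by case: for the exceptional triangle with vertices $(1,0)$, $(3,1)$, $(0,3)$, relaxing the two edges meeting at $(1,0)$ gives the system $x - 2y = 2$ and $3x + y = 2$, whose solution has $x = \tfrac{6}{7}$. For the Case~1 (possibly degenerate) quadrilaterals with vertices $(0,0)$, $(0,-2)$, $(x_1,-1)$, $(x_2,-1)$, I would relax the two edges meeting at the apex $(0,0)$, namely $x + x_2 y = 1$ and $-x - x_1 y = 1$, which force $y = \tfrac{2}{x_2 - x_1}$; this is never an integer because $x_2 - x_1 \ge 3$. For the Case~2 family, with $k \ge 4$ points at height $-1$ together with the point $(-1,0)$, I would instead relax the two edges meeting at the bottom vertex $(0,-2)$, obtaining $x = \tfrac{2-k}{2k-1}$, which again is never an integer since $k \ge 4$ yields $0 < |x| < 1$. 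Finally, the exceptional Case~2 polygon is the triangle with vertices $(-2,1)$, $(0,-2)$, $(3,-1)$; relaxing the two edges at $(0,-2)$ gives $3x + 2y = -5$ and $x - 3y = 7$, whence $y = -\tfrac{26}{11}$. In every case $Q^{(-1)}$ has a non-lattice vertex, so by Lemma~\ref{lemma:not_lattice} no such $Q$ is an interior polygon.

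The main obstacle is treating the two infinite families uniformly. For these I must select, for each family, a vertex whose relaxed position depends on the parameter in a controlled way, and then verify that the diploptigon inequalities---$x_2 - x_1 \ge 3$ in Case~1 and $k \ge 4$ in Case~2---pin the relevant coordinate strictly between two consecutive integers. The two exceptional triangles require only finite computation; the one point of care there is to confirm that the intersection point I compute is a genuine vertex of $Q^{(-1)}$, i.e., that no relaxed edge is pushed out of contact, which is immediate for these small polygons.
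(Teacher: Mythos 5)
Your proposal is correct and takes essentially the same route as the paper's proof: reduce via Lemma~\ref{lemma:not_lattice}, then for each diploptigon in the classification relax the two edges meeting at a well-chosen vertex and check that their intersection point is non-integral, and indeed you pick the very same vertices and obtain the same values (up to the notational difference that you index the Case~2 family by the number of points at height $-1$ rather than by the largest $x$-coordinate, as the paper's proof does). Your explicit remark that one must confirm the computed point is a genuine vertex of $Q^{(-1)}$ is a detail the paper leaves implicit, so nothing is missing.
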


\begin{proof}

Let $P$ be a  diploptigon.  We will argue that $P^{(-1)}$ is not a lattice polygon.  By Lemma \ref{lemma:not_lattice}, this will show that $Q_\textrm{int}$ is not a diploptigon for any lattice polygon \(Q\).


First, if $\ld(P)\leq 2$, then $P$ is the unique diploptigon from Figure \ref{figure:polygons_ld2lw3}. Up to translation, this is defined by the inequalities $-3x-y\leq -3$, $x-2y\leq 1 $, and $2x+3y\leq 9$.  The inequalities defining $P^{(-1)}$ are thus $-3x-y\leq -2$, $x-2y\leq 2 $, and $2x+3y\leq 10$.  The first two inequalities introduce a vertex with coordinates $\left( 6/7,-4/7\right)$ implying that $P^{(-1)}$ is not a lattice polygon.

If $\ld(P)\geq 3$, then $P$ falls into one of the three classes in Figure \ref{figure:diplo_types_ld3}.  In the first case, the top two edges yield the inequalities $-x-\alpha y\leq0$ and $x+(\alpha+k-1)y\leq0$. The relaxed versions of these inequalities are  $-x-\alpha y\leq1$ and $x+(\alpha+k-1)y\leq1$, which yield a vertex at the point $\left(\frac{-2\alpha-k+1}{k-1},\frac{2}{k-1}\right)$.  Since $k\geq 4$, this is not an integral point, so $P^{(-1)}$ is not a lattice polygon.  For the second case bottom two edges are defined by the inequalities $x-ky \leq 2k$ and $-2x-y\leq 2$, which relax to $x-ky \leq 2k+1$ and $-2x-y\leq 3$.  These introduce the intersection point $\left(-\frac{k-1}{2k+1},\frac{-4k-5}{2k+1}\right)$, which cannot be an integer; for instance, $2k+1$ cannot divide $k-1$, since $k\geq 4$.  Finally, the bottom two edges of the third polygon are defined by the inequalities $-3x-2y\leq 4$ and $x-3y\leq 6$.  These relax to $-3x-2y\leq 5$ and $x-3y\leq 7$, yielding an intersection point $\left(-\frac{1}{11},-\frac{26}{11}\right)$.  In every case, $P^{(-1)}$ is not a lattice polygon.  This completes the proof.

\end{proof}

We are now ready to prove that the prism of genus $g$ is tropically planar if and only if $g\leq 11$.

\begin{proof}[Proof of Theorem \ref{thm:prismgraph}, ``only if'' part]  Between Figures \ref{figure:prism_g4_curve} and \ref{figure:prism_curves_5_to_11}, we have examples that illustrate that $\mathfrak{p}_n$ is tropically planar for $3\leq n\leq 10$; that is, for genus $4\leq g\leq 11$.

Conversely, assume that $\mathfrak{p}_n$ is tropically planar.  This means it has a planar embedding dual to a regular unimodular triangulation of a polygon $P$.  If that embedding is the standard embedding of $\mathfrak{p}_n$, then there must be an interior lattice point of $P$ from which all other interior lattice points are visible.  Thus $P_{\textrm{int}}$ is a panoptigon.  By \cite{panoptigons}, the genus of $P$ (and thus of $\mathfrak{p}_n$) is at most $11$.

Conversely, if the embedding is the non-standard embedding, then there must be two interior lattice points $p$ and $q$ of $P$ from which all other interior lattice points of $P$ are simultaneously visible; thus $P_\textrm{int}$ is either a panoptigon (since we do not know if $p$ and $q$ are visible to one another) or a diploptigon.  But it cannot be a diploptigon by Proposition \ref{diplolemma}, and so must be a panoptigon.  Again, we know that the genus is at most $11$.
\end{proof}

\begin{remark}\label{remark:embeddings}
We have already seen in Figure \ref{figure:prism_g4_curve} that the non-standard embedding of the prism graph of genus $4$ can appear in a smooth tropical plane curve.  We might ask, for $6\leq g \leq 11$, whether or not the non-standard embedding can appear in a smooth tropical plane curve (for $g=5$ the prism graph has only one embedding, and for $g\geq 12$ we know there does not exist \emph{any} embedding of the prism in a tropical curve).  Such a tropical curve could not arise from a polygon $P$ where $P_{\textrm{int}}$ is a diploptigon by Proposition \ref{diplolemma}, and so would instead have to arise from a polygon $P$ with $P_{\textrm{int}}$ a panoptigon, and in particular from a triangulation with two points $p$ and $q$ connected to all other points, but not to each other.  An exhaustive search through the panoptigons of genus $6$ through $11$ as classified in \cite{panoptigons} shows that there exists no such triangulation of a polygon whose interior polygon is a panoptigon.  This means the only genus for which the non-standard embedding of the prism graph appears in a tropical curve is $g=4$, giving us the final claim in Theorem \ref{thm:prismgraph}
\end{remark}


\bibliographystyle{plain}

\end{document}